\theoremstyle{plain}
\newtheorem{thm}{Theorem}
\newtheorem{lem}[thm]{Lemma}
\newtheorem{cor}[thm]{Corollary}
\title{Convex geometric $(k+2)$-quasiplanar representations of semi-bar $k$-visibility graphs}
\author{Jesse Geneson, Tanya Khovanova, Jonathan Tidor}
\date{}
\begin{document}
\maketitle

\begin{abstract}
We examine semi-bar visibility graphs in the plane and on a cylinder in which sightlines can pass through $k$ objects. We show every semi-bar $k$-visibility graph has a $(k+2)$-quasiplanar representation in the plane with vertices drawn as points in convex position and edges drawn as segments. We also show that the graphs having cylindrical semi-bar $k$-visibility representations with semi-bars of different lengths are the same as the $(2k+2)$-degenerate graphs having edge-maximal $(k+2)$-quasiplanar representations in the plane with vertices drawn as points in convex position and edges drawn as segments. 
\end{abstract}

\section{Introduction} 
Bar visibility graphs are graphs for which vertices can be drawn as horizontal segments (bars) and edges can be drawn as vertical segments (sightlines) so that two bars are visible to each other if and only if there is a sightline which intersects them and no other bars. The study of bar visibility graphs was motivated in part by the problem of efficiently designing very large scale integration (VLSI) circuits \cite{Luccio}. Past research has shown how to represent planar graphs and plane triangular graphs as bar visibility graphs \cite{Duchet, bartri}. Dean \emph{et al.} \cite{AlDeank} introduced a generalization of bar visibility graphs in which bars are able to see through at most $k$ other bars for some nonnegative integer $k$. These graphs are known as bar $k$-visibility graphs. 

We study bar $k$-visibility graphs in which every bar has left endpoint on the $y$-axis. Such bars are called \emph{semi-bars}. We also consider semi-bar $k$-visibility graphs obtained by placing the semi-bars on the surface of a cylinder with each semi-bar parallel to the cylinder's axis of symmetry. Felsner and Massow \cite{Felsner} proved that the maximum number of edges in any semi-bar $k$-visibility graph with $n$ vertices is $(k+1)(2n-2k-3)$ for $n \geq 2k+2$ and $\binom{n}{2}$ for $n \leq 2k+2$. Similar bounds were derived by Capoyleas and Pach \cite{CP} when they proved that the maximum number of straight line segments in the plane connecting $n$ points in convex position so that no $k+2$ segments are pairwise crossing is $\binom{n}{2}$ for $n \leq 2k+2$ and $2(k+1) n - \binom{2k+3}{2}$ for $n \geq 2k+2$. 

We prove that every semi-bar or cylindrical semi-bar $k$-visibility graph can be represented in the plane with vertices drawn as points in convex position and edges drawn as segments so there are no $k+2$ pairwise crossing edges. Furthermore, we prove that the class of graphs having cylindrical semi-bar $k$-visibility representations with semi-bars of different lengths is the same as the class of $(2k+2)$-degenerate graphs having edge-maximal $(k+2)$-quasiplanar representations in the plane with vertices drawn as points in convex position and edges drawn as segments. Section~\ref{sec:order} contains a more detailed description of the results.

\section{Definitions}

A \emph{semi-bar $k$-visibility representation} of a graph $G = (V, E)$ is a collection $\left\{s_{v}\right\}_{v \in V}$ of disjoint segments in the plane parallel to the $x$-axis with left endpoints on the $y$-axis such that for all $a, b \in V$ there is an edge $\left\{a,b\right\} \in E$ if and only if there exists a vertical segment (a \emph{sightline}) which intersects $s_{a}$, $s_{b}$, and at most $k$ other semi-bars. A graph is a \emph{semi-bar $k$-visibility graph} if it has a semi-bar $k$-visibility representation. 

A \emph{cylindrical semi-bar $k$-visibility representation} of a graph $G = (V, E)$ is a collection $\left\{s_{v}\right\}_{v \in V}$ of disjoint segments parallel to the $x$-axis in three dimensions with left endpoints on the circle $\left\{(0, y, z): y^{2}+z^{2} = 1 \right\}$ such that for all $a, b \in V$ there is an edge $\left\{a,b\right\} \in E$ if and only if there exists a circular arc along the surface of the cylinder parallel to the $y z$ plane (a \emph{sightline}) which intersects $s_{a}$, $s_{b}$, and at most $k$ other semi-bars. A graph is a \emph{cylindrical semi-bar $k$-visibility graph} if it has a cylindrical semi-bar $k$-visibility representation. Figure~\ref{cylbar} shows a cylindrical semi-bar visibility graph and a two-dimensional view of a corresponding representation in which bars are represented by radial segments and sightlines are represented by arcs.

\begin{figure}[t]
\centering
\mbox{\subfigure{\includegraphics[scale = 0.3]{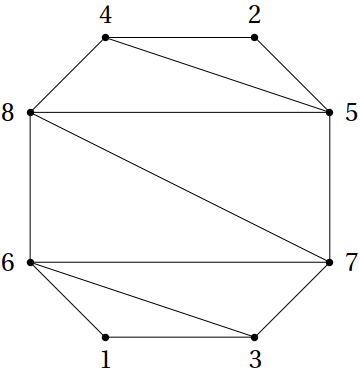}}\quad
\subfigure{\includegraphics[scale = 0.375]{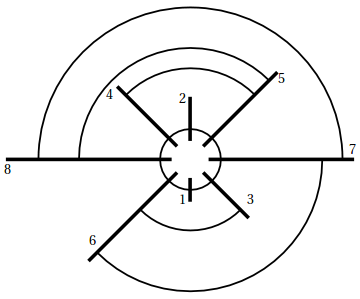} }}
\caption{A cylindrical semi-bar visibility graph and a corresponding representation.} \label{cylbar}
\end{figure}

A graph is \emph{$k$-quasiplanar} if it can be drawn in the plane with no $k$ pairwise crossing edges. For example $2$-quasiplanar graphs are planar. Call a $k$-quasiplanar graph $G$ \emph{convex geometric} if it has a $k$-quasiplanar representation $C_{G}$ with vertices drawn as points in convex position and edges drawn as segments. Call a $k$-quasiplanar convex geometric representation \emph{maximal} if adding any straight edge to the representation causes it to have $k$ pairwise crossing edges. 

In a set of points in the plane, call a pair of points a \emph{$j$-pair} if the line through those points has exactly $j$ points on one side. Every maximal $(k+2)$-quasiplanar convex geometric representation has edges between all $j$-pairs in the representation for each $j \leq k$.

A graph is called \emph{$l$-degenerate} if all of its subgraphs contain a vertex of degree at most $l$. Cylindrical semi-bar $k$-visibility graphs are $(2k+2)$-degenerate for all $k \geq 0$ since the shortest semi-bar in any subset of semi-bars sees at most $2k+2$ other semi-bars, so cylindrical semi-bar $k$-visibility graphs have chromatic number at most $2k+3$ and clique number at most $2k+3$. Furthermore, Felsner and Massow \cite{Felsner} showed $K_{2k+3}$ is a semi-bar $k$-visibility graph, so $K_{2k+3}$ is also a cylindrical semi-bar $k$-visibility graph. Thus $2k+3$ is the maximum possible chromatic number and clique number of cylindrical semi-bar $k$-visibility graphs.

\section{Order of results}\label{sec:order}

In Section~\ref{sec:sbkv} we show every cylindrical semi-bar $k$-visibility graph is a $(k+2)$-quasiplanar convex geometric graph. In particular, every cylindrical semi-bar $k$-visibility graph with a representation having semi-bars of different lengths has a maximal $(k+2)$-quasiplanar convex geometric representation. Furthermore, we show that if a semi-bar $k$-visibility representation $R$ with semi-bars of different lengths is curled into a cylindrical semi-bar $k$-visibility representation $R'$, then the graphs corresponding to $R$ and $R'$ will be the same if and only if the top $k+1$ and bottom $k+1$ semi-bars in $R$ comprise the longest $2k+2$ semi-bars in $R$ and the longest $2k+2$ semi-bars in $R$ are all visible to each other. 

In Section~\ref{sec:kqcg} we show every graph with a maximal planar convex geometric representation can be represented as a semi-bar or cylindrical semi-bar visibility graph with semi-bars of different lengths. Moreover, we show every $(2k+2)$-degenerate graph with a maximal $(k+2)$-quasiplanar convex geometric representation has a cylindrical semi-bar $k$-visibility representation with semi-bars of different lengths. For each $k \geq 1$ we also exhibit $(k+2)$-quasiplanar convex geometric graphs which are not subgraphs of any cylindrical semi-bar $k$-visibility graph. Furthermore, we exhibit maximal $(k+2)$-quasiplanar convex geometric representations which cannot be transformed, without changing cyclic positions of vertices, into cylindrical semi-bar $k$-visibility representations having semi-bars of different lengths with a pair of adjacent semi-bars among the $2k+3$ longest semi-bars.

\section{Cylindrical semi-bar $k$-visibility graphs}\label{sec:sbkv}
We show that every cylindrical semi-bar $k$-visibility graph with $n$ semi-bars and $m$ edges can be represented by $n$ points in the plane with $m$ segments between them such that there are no $k+2$ pairwise crossing segments. This gives an alternative proof for the upper bound on the maximum number of edges in a semi-bar $k$-visibility graph with $n$ vertices in conjunction with the upper bound in \cite{CP}. 

\begin{thm}\label{sbtokc}
Every cylindrical semi-bar $k$-visibility graph is a $(k+2)$-qua\-si\-pla\-nar convex geometric graph.
\end{thm}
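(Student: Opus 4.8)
The plan is to read off the convex geometric representation directly from the cyclic structure of the cylindrical representation, and then to rule out $k+2$ pairwise crossing chords by a counting argument centered on the shortest semi-bar involved. First I would fix a cylindrical semi-bar $k$-visibility representation $\left\{s_v\right\}_{v\in V}$ and associate to each $v$ its angular coordinate $\theta_v\in[0,2\pi)$ (the position of the left endpoint of $s_v$ on the unit circle) together with its length $\ell_v$ (its extent in the $x$-direction). Placing the points $\{v\}$ on a circle in the cyclic order of the $\theta_v$ yields a set in convex position, and I would draw each edge of $G$ as the straight chord between its endpoints. I would then invoke the standard fact that two such chords cross in their interiors if and only if their four (necessarily distinct) endpoints alternate around the circle. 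The goal is to show this drawing contains no $k+2$ pairwise crossing chords.

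For the core step, suppose for contradiction that chords $e_1,\dots,e_{k+2}$ pairwise cross. Since pairwise crossing forbids shared endpoints, these span $2(k+2)$ distinct vertices; let $c$ be one of minimum length among them, and write its chord as $e_1=\{c,d\}$. Because $e_1$ is an edge, it is realized by a sightline arc $\gamma$ meeting $s_c$, $s_d$, and at most $k$ other semi-bars; since $\gamma$ meets $s_c$, its $x$-coordinate is at most $\ell_c$. Now each of the remaining $k+1$ chords crosses $e_1$, so by the alternation property each has exactly one endpoint $v_i$ whose angular position lies strictly inside the arc spanned by $\gamma$, the other endpoint lying on the complementary arc. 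These $v_i$ are $k+1$ distinct vertices, and each satisfies $\ell_{v_i}\ge\ell_c$ by the choice of $c$, while $\ell_c$ is in turn at least the $x$-coordinate of $\gamma$; hence $\gamma$ meets every $s_{v_i}$. Thus $\gamma$ meets at least $k+1$ semi-bars besides $s_c$ and $s_d$, contradicting that it meets at most $k$ others. Therefore no $k+2$ chords pairwise cross, and the drawing is a $(k+2)$-quasiplanar convex geometric representation.

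The part requiring the most care is the interaction between the two possible arc directions around the cylinder and the presence of semi-bars of equal length. I expect that choosing $c$ to be a global minimizer among the $2(k+2)$ endpoints and then arguing with the \emph{actual} realizing arc $\gamma$ (rather than re-choosing a fresh sightline level) neutralizes both issues: the chain of inequalities $\ell_{v_i}\ge\ell_c\ge(\text{$x$-coordinate of }\gamma)$ guarantees that each intervening semi-bar is met no matter which way $\gamma$ wraps, and whichever side of the chord $cd$ the arc $\gamma$ corresponds to is exactly the side on which the $k+1$ crossing chords each deposit one endpoint. Combined with the edge upper bound of Capoyleas and Pach \cite{CP} for convex geometric $(k+2)$-quasiplanar graphs, this argument also re-derives the Felsner--Massow bound on the number of edges of a semi-bar $k$-visibility graph.
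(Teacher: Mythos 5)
Your proof is correct and takes essentially the same approach as the paper: the same construction placing points in convex position in the cyclic order of the semi-bars, the same choice of a minimum-length endpoint among the $k+2$ pairwise crossing chords, and the same contradiction that the $k+1$ crossing chords each force an endpoint on the side of the realizing sightline whose semi-bar the sightline must then meet. The only cosmetic difference is that you argue directly with the realizing arc $\gamma$ and the chain $\ell_{v_i}\ge\ell_c\ge x(\gamma)$, whereas the paper phrases the identical step as the existence of a half-space in which $b_i$ and $b_j$ are longer than all but at most $k$ semi-bars; your formulation handles possible ties in length slightly more explicitly, but the argument is the same.
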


\begin{proof}
Let $H$ be a cylindrical semi-bar $k$-visibility graph with $n$ vertices. Let $B_{H}$ be a cylindrical semi-bar $k$-visibility representation of $H$ and let $b_{1}, b_{2}, \ldots, b_{n}$ be the semi-bars in $B_{H}$ listed in cyclic order. Let $p_{1}, \ldots, p_{n}$ be any points in the plane in convex position listed in cyclic order. Draw a segment between $p_{i}$ and $p_{j}$ if and only if there is an edge in $H$ between the vertices corresponding to $b_{i}$ and $b_{j}$. For contradiction suppose that the resulting drawing contains a set $C$ of $k+2$ pairwise crossing edges. None of the edges in $C$ have any vertices in common, or else they would not be crossing.

Consider an edge $e \in C$ between two points $p_{i}$ and $p_{j}$ for which $i < j$ and $b_{i}$ or $b_{j}$ is a semi-bar of minimal length among all semi-bars corresponding to points in edges in $C$. Since the other $k+1$ edges in $C$ cross $e$, then each edge in $C$ besides $e$ has some endpoint $p_{t_{1}}$ for which $i < t_{1} < j$ and another endpoint $p_{t_{2}}$ for which $t_{2} < i$ or $j < t_{2}$. Since $H$ has an edge between the vertices corresponding to $b_{i}$ and $b_{j}$, then $b_{i}$ and $b_{j}$ are both longer than all but at most $k$ of the semi-bars in at least one of the half-spaces bounded by the plane through $b_{i}$ and $b_{j}$. Therefore, there exists $t'$ for which $b_{t'}$ is shorter than both $b_{i}$ and $b_{j}$, and $p_{t'}$ is an endpoint of one of the edges in $C$. This is a contradiction.
\end{proof}

The last theorem showed every subgraph of a cylindrical semi-bar $k$-visibility graph is a $(k+2)$-quasiplanar convex geometric graph. In particular, every semi-bar $k$-visibility graph is a $(k+2)$-quasiplanar convex geometric graph.

\begin{cor}
The maximum number of edges in a cylindrical semi-bar $k$-visibility graph with $n$ vertices is $(k+1)(2n-2k-3)$ for $n \geq 2k+2$ and $\binom{n}{2}$ for $n \leq 2k+2$.
\end{cor}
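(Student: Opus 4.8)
The plan is to prove the two bounds separately, deriving the upper bound from Theorem~\ref{sbtokc} and the lower bound from a planar extremal example that I transport to the cylinder, with the upper bound doing the work of pinning the construction down to the exact value.

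First I would establish the upper bound. Any cylindrical semi-bar $k$-visibility graph $H$ on $n$ vertices is, by Theorem~\ref{sbtokc}, a $(k+2)$-quasiplanar convex geometric graph. Hence the Capoyleas--Pach bound \cite{CP} applies: $H$ has at most $\binom{n}{2}$ edges when $n \leq 2k+2$ and at most $2(k+1)n - \binom{2k+3}{2}$ edges when $n \geq 2k+2$. I would then record the algebraic identity $\binom{2k+3}{2} = (k+1)(2k+3)$, which gives $2(k+1)n - \binom{2k+3}{2} = 2(k+1)n - (k+1)(2k+3) = (k+1)(2n-2k-3)$, so the upper bound takes exactly the stated form.

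For the lower bound I would invoke the extremal semi-bar $k$-visibility graphs of Felsner and Massow \cite{Felsner}, which meet these counts, and then \emph{curl} such an extremal representation $R$ onto a cylinder: map the $y$-coordinate of each semi-bar to an angular coordinate on the circle $\{(0,y,z):y^2+z^2=1\}$ while keeping lengths (the $x$-extents) fixed, producing a cylindrical representation $R'$ with graph $G'$. The key point is that curling can only add edges, never delete them: a vertical sightline certifying an edge of $R$ becomes, at the same height, the circular arc spanning the same angular interval, and this arc meets precisely the same semi-bars, hence still certifies the edge through at most $k$ blockers. Therefore $E(G)\subseteq E(G')$ and $G'$ has at least $(k+1)(2n-2k-3)$ edges (respectively $\binom{n}{2}$ for $n\leq 2k+2$). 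Applying the upper bound of the previous paragraph to the cylindrical graph $G'$ itself caps $|E(G')|$ from above by the same quantity, so the two inequalities force equality and $G'$ attains the bound.

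The main (and pleasantly mild) obstacle I expect is making the monotonicity under curling rigorous, namely verifying that the inside arc picks up no blocker beyond those already present in the planar sightline, so that $E(G)\subseteq E(G')$; notably I would \emph{not} need to analyze which wrap-around edges curling creates, since the upper bound already controls $|E(G')|$ from above. As a consistency check for the range $n\leq 2k+2$, I would observe directly that $K_n$ is a cylindrical semi-bar $k$-visibility graph, because the shorter of the two arcs between any two of $n\leq 2k+2$ bars contains at most $\lfloor (n-2)/2\rfloor \leq k$ other bars regardless of their lengths, so every pair is visible.
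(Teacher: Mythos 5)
Your proposal is correct. The upper bound half is exactly the paper's route: Theorem~\ref{sbtokc} combined with the Capoyleas--Pach bound, using the identity $\binom{2k+3}{2} = (k+1)(2k+3)$ to see that $2(k+1)n - \binom{2k+3}{2} = (k+1)(2n-2k-3)$. Where you genuinely diverge is the achievability half. The paper does not curl an extremal planar example; instead, in the lemma immediately following this corollary, it directly counts the edges of \emph{any} cylindrical representation whose semi-bars have distinct lengths: each semi-bar not among the $2k+2$ longest is the shorter endpoint of exactly $2k+2$ edges, and the $i$-th longest semi-bar for $1 \leq i \leq 2k+2$ is the shorter endpoint of exactly $i-1$ edges, which sums to $(k+1)(2n-2k-3)$ when $n \geq 2k+2$. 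You instead take Felsner--Massow's extremal semi-bar representations, curl them onto the cylinder (the monotonicity of curling that you flag as the main obstacle is legitimate and is in fact asserted by the paper itself: ``this transformation never deletes sightlines''), and squeeze $|E(G')|$ between the inherited lower bound and the quasiplanarity upper bound. Both arguments are sound. The paper's direct count buys more: it shows that \emph{every} distinct-lengths cylindrical representation attains the maximum exactly, which is precisely what is needed later to conclude that the resulting $(k+2)$-quasiplanar convex geometric representations are maximal. Your squeeze is lighter---it requires no computation and cleverly sidesteps analyzing the wrap-around edges---but it only produces \emph{some} extremal cylindrical graph, so it proves this corollary and nothing beyond it.
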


If the semi-bars in a cylindrical semi-bar $k$-visibility representation have different lengths, then the representation will have the maximum possible number of edges. The proof of this fact is similar to the proof of the upper bound for the number of edges in semi-bar $k$-visibility graphs in \cite{Felsner}.

\begin{lem}
The exact number of edges in every graph on $n$ vertices having a cylindrical semi-bar $k$-visibility representation with semi-bars of different lengths is $(k+1)(2n-2k-3)$ for $n \geq 2k+2$ and $\binom{n}{2}$ for $n \leq 2k+2$.
\end{lem}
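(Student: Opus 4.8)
The plan is to establish the exact count by peeling off the unique shortest semi-bar and inducting on $n$; the recurrence this produces will reproduce the claimed formula, so the Corollary's upper bound is not even needed (though one could alternatively cite it and prove only the matching lower bound). Fix a cylindrical semi-bar $k$-visibility representation $B = \{s_1,\ldots,s_n\}$ with pairwise distinct lengths, and let $b$ be the unique shortest semi-bar, with lengths $\ell_e$ and angular positions $\theta_e$. First I would record the basic visibility criterion in the two-dimensional radial view: a sightline is specified by an axial coordinate $t$ together with one of the two arcs (clockwise or counterclockwise) joining its endpoints, and it meets exactly those semi-bars of length at least $t$ whose angular position lies on the chosen arc. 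Hence, for two semi-bars $s_a,s_b$ and a fixed angular direction, the minimum number of other semi-bars a joining sightline must cross is attained at $t = \min(\ell_a,\ell_b)$ and equals the number of $s_e$ with $\ell_e \ge \min(\ell_a,\ell_b)$ whose angle lies strictly between $\theta_a$ and $\theta_b$ in that direction. Thus $\{a,b\}$ is an edge if and only if this count is at most $k$ for at least one of the two directions.

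Next I would extract two facts about the shortest bar $b$, both consequences of $\ell_b < \ell_e$ for every $e \neq b$. First, every semi-bar lying angularly between $b$ and any other $s_c$ blocks the optimal joining sightline, so $b$ sees $s_c$ exactly when the number of semi-bars strictly between them in the shorter direction is at most $k$; traversing the cycle, this makes $b$ adjacent to precisely the $k+1$ nearest semi-bars in each direction, giving $\deg(b) = \min(2k+2,\,n-1)$. Second, for any two semi-bars $s_c,s_d$ distinct from $b$, the optimal sightline between them sits at axial coordinate $\min(\ell_c,\ell_d) > \ell_b$, which $b$ does not reach; therefore $b$ never blocks an optimal sightline between two other bars, and deleting $s_b$ leaves every adjacency among the remaining bars unchanged. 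In particular $B \setminus \{s_b\}$ is again a distinct-length cylindrical semi-bar $k$-visibility representation, now on $n-1$ bars, whose graph is exactly $H$ with $b$ removed (the induced subgraph, not merely some supergraph of it).

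I would then run the induction. For $2 \le n \le 2k+2$, any two angular positions are within cyclic distance $\lfloor n/2\rfloor \le k+1$, so the shortest bar is adjacent to all others; peeling repeatedly shows $H$ is complete with $\binom{n}{2}$ edges, matching the formula (and agreeing with the other expression at $n = 2k+2$, where both equal $(k+1)(2k+1)$). For $n \ge 2k+3$ the first fact gives $\deg(b) = 2k+2$, and the second reduces the count to $n-1$ bars, yielding $|E_n| = (2k+2) + |E_{n-1}|$. Since $(k+1)(2n-2k-3)$ satisfies exactly this recurrence and takes the value $\binom{2k+2}{2}$ at $n = 2k+2$, the induction closes.

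The main obstacle is the visibility criterion together with the two facts about $b$: one must verify carefully that the minimum-crossing sightline lives at $t = \min(\ell_a,\ell_b)$ and that on the cylinder the minimum is taken over both angular directions, including the possibility of wrapping the long way around. Once that is pinned down, the genuinely subtle point is the second fact—that the unique shortest bar is irrelevant to all other adjacencies, so that its deletion produces exactly the induced subgraph rather than one with spuriously created edges. This is what legitimizes the clean recurrence; everything after it is the bookkeeping verified above.
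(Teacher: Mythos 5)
Your proof is correct and is essentially the paper's argument in recursive form: the paper directly charges each edge to its shorter endpoint (a bar outside the $2k+2$ longest accounts for exactly $2k+2$ edges, the $i$-th longest for $i-1$), and your induction of peeling the unique shortest bar and using $\deg(b)=\min(2k+2,\,n-1)$ unrolls to precisely that same count. The one thing you add beyond the paper is the explicit justification (your ``second fact'') that deleting the shortest bar leaves all other adjacencies unchanged, a point the paper's terse proof leaves implicit.
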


\begin{proof}
If $n \leq 2k+2$, then the graph is complete. For $n > 2k+2$ we count, for each semi-bar $b$, how many edges in the cylindrical semi-bar $k$-visibility representation have $b$ as the shorter semi-bar. If $b$ is not among the $2k+2$ longest semi-bars, then there are $2k+2$ such edges. If $b$ is the $i^{th}$ longest semi-bar for some $1 \leq i \leq 2k+2$, then there are $i-1$ edges having $b$ as the shorter semi-bar.
\end{proof}

If the semi-bars in the representation have different lengths, then we also show that the construction in Theorem~\ref{sbtokc} yields a maximal $(k+2)$-quasiplanar convex geometric representation.

\begin{thm}
Every cylindrical semi-bar $k$-visibility graph with a representation having semi-bars of different lengths has a maximal $(k+2)$-quasiplanar convex geometric representation.
\end{thm}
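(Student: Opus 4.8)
The plan is to combine the previous theorem with the edge-count lemma. By Theorem~\ref{sbtokc}, the construction already produces a $(k+2)$-quasiplanar convex geometric representation $R$ of the graph $H$, with points $p_1,\dots,p_n$ in convex position in the same cyclic order as the semi-bars $b_1,\dots,b_n$. What remains is to show that this particular representation is \emph{maximal}, i.e.\ that adding any segment between two nonadjacent points creates $k+2$ pairwise crossing edges. The cleanest route is a counting argument: by the preceding lemma, since the semi-bars have different lengths, $R$ already contains $(k+1)(2n-2k-3)$ edges (for $n \geq 2k+2$; the case $n \leq 2k+2$ is trivial since the graph is complete and nothing can be added). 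By Capoyleas--Pach \cite{CP}, this is exactly the maximum number of segments among $n$ points in convex position with no $k+2$ pairwise crossing, namely $2(k+1)n - \binom{2k+3}{2}$. So the edge set is already of maximum size, and adding \emph{any} further segment must destroy the $(k+2)$-quasiplanar property.

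First I would verify that the two edge-count expressions agree: $(k+1)(2n-2k-3) = 2(k+1)n - (k+1)(2k+3)$, and $(k+1)(2k+3) = \binom{2k+3}{2}$, so the Felsner--Massow count and the Capoyleas--Pach extremal bound coincide. This identity is the crux of the whole argument and should be stated explicitly. Second, I would invoke the lemma to assert that $R$ achieves this count exactly (using the different-lengths hypothesis). Third, I would argue maximality: if some segment $s$ between two points not already joined could be added while keeping the drawing $(k+2)$-quasiplanar, the resulting drawing would have $2(k+1)n - \binom{2k+3}{2} + 1$ segments among $n$ convex-position points with no $k+2$ pairwise crossing, contradicting the Capoyleas--Pach upper bound.

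The only subtlety I anticipate concerns the definition of ``adding an edge.'' The notion of maximal from the Definitions section refers to adding a \emph{straight} segment between two points already present in the representation; since all $n$ points $p_1,\dots,p_n$ are fixed and in convex position, any candidate new edge is a chord between two of these points, so the Capoyleas--Pach bound applies directly to the augmented drawing. I would make this explicit to rule out any worry that one might be allowed to reposition points or add new ones. I would also note the boundary regime $n \leq 2k+2$, where $H$ is complete and maximality holds vacuously because no new edge can be added at all.

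The main obstacle, such as it is, is not conceptual difficulty but making sure the extremal bound of \cite{CP} is applied in exactly the right direction and that the different-lengths hypothesis is genuinely needed (it is what forces $R$ to hit the maximum rather than merely be $(k+2)$-quasiplanar). An alternative, more hands-on approach would avoid the extremal count and instead show directly that any chord one tries to add is a $j$-pair for some $j \geq k+1$, hence completes a family of $k+2$ pairwise crossing edges already present among the shorter $j$-pairs; but this essentially re-derives \cite{CP} and is strictly more work, so I would prefer the counting argument above.
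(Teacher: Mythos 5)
Your proposal is correct and takes essentially the same route as the paper's proof: both argue that, by the preceding lemma, the different-lengths hypothesis forces the representation from Theorem~\ref{sbtokc} to already have the maximum number of edges allowed by the Capoyleas--Pach bound \cite{CP}, so no edge can be added without creating $k+2$ pairwise crossing edges. Your explicit check that $(k+1)(2n-2k-3) = 2(k+1)n - \binom{2k+3}{2}$ and your treatment of the case $n \leq 2k+2$ merely spell out details the paper leaves implicit.
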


\begin{proof}
Every cylindrical semi-bar $k$-visibility graph $G$ on $n$ vertices with a representation having semi-bars of different lengths has $(k+1)(2n-2k-3)$ edges for $n \geq 2k+2$ and $\binom{n}{2}$ edges for $n \leq 2k+2$. Furthermore, every $(k+2)$-quasiplanar convex geometric graph has at most $(k+1)(2n-2k-3)$ edges for $n \geq 2k+2$ and $\binom{n}{2}$ edges for $n \leq 2k+2$. In Theorem~\ref{sbtokc} we proved $G$ has a $(k+2)$-quasiplanar convex geometric representation. Thus $G$ has a maximal $(k+2)$-quasiplanar convex geometric representation.
\end{proof}

Any semi-bar $k$-visibility representation $R$ can be curled into a cylindrical semi-bar $k$-visibility representation by placing the semi-bars in $R$ on the cylinder $y^{2}+z^{2} = 1$ in the same order in which they occur in $R$. This transformation never deletes sightlines, but it may add sightlines. If a semi-bar visibility representation $R$ with semi-bars of different lengths is curled into a cylindrical semi-bar visibility representation $R'$, then the graphs corresponding to $R$ and $R'$ will be the same if and only if the top and bottom semi-bars in $R$ are the two longest semi-bars in $R$. The next lemma generalizes the previous statement to $k$-visibility for any $k > 0$.

\begin{lem}
If a semi-bar $k$-visibility representation $R$ with semi-bars of different lengths is curled into a cylindrical semi-bar $k$-visibility representation $R'$, then the graphs corresponding to $R$ and $R'$ will be the same if and only if the top $k+1$ and bottom $k+1$ semi-bars in $R$ comprise the longest $2k+2$ semi-bars in $R$ and the longest $2k+2$ semi-bars in $R$ are all visible to each other. 
\end{lem}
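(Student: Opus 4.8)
The plan is to first translate both representations into a common combinatorial language. Index the semi-bars $1,\dots,n$ from bottom to top in $R$, with distinct lengths $\ell_1,\dots,\ell_n$; in $R'$ these become a cyclic sequence. A vertical sightline realizing a planar edge $\{i,j\}$ with $i<j$ is best placed at horizontal coordinate $\min(\ell_i,\ell_j)$, so $i$ and $j$ are adjacent in $G(R)$ iff at most $k$ of the bars strictly between them are longer than $\min(\ell_i,\ell_j)$. On the cylinder there are two arcs joining $i$ and $j$: the \emph{interior} arc through $\{i+1,\dots,j-1\}$ (which reproduces the planar condition) and the \emph{exterior} arc through $O=\{1,\dots,i-1\}\cup\{j+1,\dots,n\}$ that crosses the seam. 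Since curling never deletes sightlines, $E(R)\subseteq E(R')$, so the whole task reduces to deciding when no \emph{new} edge is created, i.e.\ when no pair is joined by its exterior arc but not its interior one. Throughout I take $n\ge 2k+3$; for $n\le 2k+2$ both representations give $K_n$ and both conditions hold trivially. Call the $2k+2$ longest bars \emph{long} and the rest \emph{short}.

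For the direction ($\Leftarrow$), assume the long bars occupy exactly the extreme positions $\{1,\dots,k+1\}\cup\{n-k,\dots,n\}$ and are pairwise visible in $R$. I would take any pair $\{i,j\}$ visible via its exterior arc, let $u\in\{i,j\}$ be the shorter bar, and observe that if $u$ were a short (hence middle) bar then $O$ would contain one entire block of $k+1$ long bars, all taller than $u$, forcing more than $k$ crossings on the exterior arc, a contradiction. Hence $u$ is long, its partner is longer and therefore also long, and by hypothesis the two long bars are already adjacent in $G(R)$. Thus no new edge appears and $G(R)=G(R')$.

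For ($\Rightarrow$), assume $G(R)=G(R')$. I would obtain the visibility condition by a pigeonhole argument: for any two long bars with shorter one $u$ of rank $r$ (necessarily $2\le r\le 2k+2$), exactly $r-1\le 2k+1$ bars are taller than $u$, one of which is the other endpoint, leaving at most $2k$ taller bars split between the two arcs; one arc then carries at most $k$ of them, so the pair is visible in $R'$ and hence in $R$. For the placement condition, I would argue by contrapositive: if it fails then, since there are $2k+2$ long bars and $2k+2$ extreme positions, some short bar $b$ sits at an extreme position, say at $p\le k+1$. Travelling down and across the seam, let $w$ be the first bar taller than $b$ among $n,n-1,\dots$; the only taller-than-$b$ bars passed before $w$ are the at most $p-1\le k$ bars below $b$, so $b$ and $w$ are joined by the exterior arc. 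But $b$ is short, so at least $2k+2$ bars are taller than $b$; none lie above $w$ (by the choice of $w$) and at most $k$ lie below $b$, so at least $k+1$ taller bars lie in the interior $(p,w)$. Hence $\{b,w\}$ is not a planar edge, contradicting $G(R)=G(R')$. The top-extreme case $p\ge n-k$ is symmetric.

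The main obstacle is the bookkeeping in this last step: one must choose the new edge so that it is genuinely realized by the exterior arc on the cylinder yet provably absent from the plane, which hinges on the count showing that at least $k+1$ taller bars accumulate in the interior while at most $k$ accumulate on the exterior arc, together with the observation that the skipped top bars above $w$ are all shorter than $b$. Keeping the roles of the shorter endpoint and the two arc directions straight, and exploiting the top--bottom symmetry to avoid duplicating the analysis, is where the care is needed.
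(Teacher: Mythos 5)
Your core argument, which you carry out for $n\geq 2k+3$, is correct and follows essentially the same route as the paper. For ($\Leftarrow$) you show that any visibility realized by the seam-crossing arc must join two of the $2k+2$ longest bars (otherwise the exterior arc contains a full extreme block of $k+1$ long bars, all taller than the shorter endpoint), and such pairs are already edges by hypothesis; this is exactly the paper's argument in contrapositive form. For ($\Rightarrow$) your pigeonhole argument for the visibility condition is the standard one (which the paper leaves implicit), and your placement argument differs from the paper's only in the choice of witness pair: the paper pairs the misplaced short bar $t$ in the top block with the bottommost bar $b$, for which the exterior arc is automatically unobstructed because only the at most $k$ bars above $t$ separate them, whereas you pair the misplaced short bar with the first taller bar $w$ found by scanning down from the top, which costs you the extra observation that the skipped bars are all shorter. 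Both are sound; the paper's choice just avoids introducing $w$.

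However, your disposal of the case $n\leq 2k+2$ is wrong as stated. It is not true that ``both representations give $K_n$'' and that ``both conditions hold trivially'': the cylindrical graph $G(R')$ is indeed complete (one of the two arcs between any pair contains at most $\lfloor (n-2)/2\rfloor \leq k$ bars), but the planar graph $G(R)$ need not be, and the visibility condition can fail. Take $k=1$, $n=4=2k+2$, with lengths $1,4,3,2$ from bottom to top: the bars of lengths $1$ and $2$ are not visible in $R$, since any sightline joining them lies at $x$-coordinate at most $1$ and hence crosses both intermediate bars, yet they see each other across the seam in $R'$. So here $G(R)\neq G(R')$ and the visibility half of the condition fails (the placement half is vacuous); both sides of the equivalence are false, so the lemma survives, but your proof of it does not. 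The fix is short: for $n\leq 2k+2$ the placement condition is vacuous and $G(R')$ is complete, so $G(R)=G(R')$ if and only if $G(R)$ is complete, which is exactly the statement that all of the (at most $2k+2$) bars are pairwise visible in $R$. Alternatively, note that your two directions barely use $n\geq 2k+3$ at all --- when there are no short bars, the ($\Leftarrow$) case analysis is vacuous and the ($\Rightarrow$) placement claim is automatic --- and the paper's own argument is uniform in $n$ for this reason.
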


\begin{proof}
If $R$ is a semi-bar $k$-visibility representation with semi-bars of different lengths such that the top $k+1$ and bottom $k+1$ semi-bars in $R$ comprise the longest $2k+2$ semi-bars in $R$ and the longest $2k+2$ semi-bars in $R$ are visible to each other, then there is no sightline in $R'$ which is not in $R$. Indeed, if there was a sightline between two semi-bars $b_{1}$ and $b_{2}$ in $R'$ which was not in $R$, then $b_{1}$ and $b_{2}$ would not both be among the longest $2k+2$ semi-bars. Then the sightline between $b_{1}$ and $b_{2}$ in $R'$ would pass through $k+1$ semi-bars that were the top $k+1$ semi-bars from $R$ or the bottom $k+1$ semi-bars from $R$. This contradicts the definition of $k$-visibility, so the graphs corresponding to $R$ and $R'$ have the same edges. 

Conversely, if the graphs corresponding to $R$ and $R'$ have the same edges, then the longest $2k+2$ semi-bars in $R$ can all see each other. Suppose for contradiction that the top $k+1$ and bottom $k+1$ semi-bars in $R$ do not comprise the longest $2k+2$ semi-bars in $R$. Without loss of generality assume that there is a semi-bar $t$ among the top $k+1$ semi-bars in $R$ that is not one of the $2k+2$ longest semi-bars in $R$. Let $b$ be the bottom semi-bar in $R$. Since there is a sightline between $b$ and $t$ in $R'$, then there must be a sightline between $b$ and $t$ in $R$. However the sightline between $b$ and $t$ in $R$ crosses at least $k+1$ semi-bars longer than $t$, a contradiction. Thus the top $k+1$ and bottom $k+1$ semi-bars in $R$ comprise the longest $2k+2$ semi-bars in $R$.
\end{proof}

\section{Maximal $(k+2)$-quasiplanar convex geometric representations}\label{sec:kqcg}
In this section we find cylindrical semi-bar $k$-visibility representations of $(2k+2)$-degenerate graphs with maximal $(k+2)$-quasiplanar convex geometric representations. First we show that every graph with a maximal planar convex geometric representation is a semi-bar visibility graph.

\begin{figure}[t]
\centering
\mbox{\subfigure{\includegraphics[scale = 0.3]{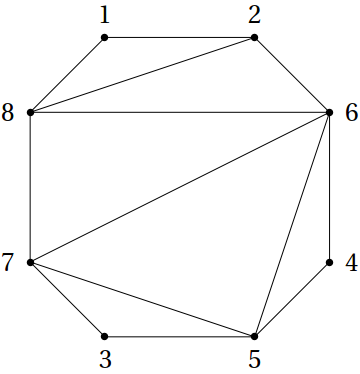}}\quad
\subfigure{\includegraphics[scale = 0.68]{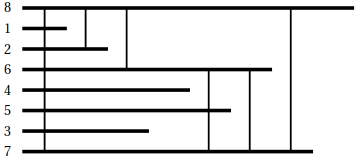} }}
\caption{A maximal planar convex geometric graph and a corresponding semi-bar visibility representation.} \label{sbar}
\end{figure}

\begin{lem} \label{mptosb}
Every graph with a maximal planar convex geometric representation has a semi-bar visibility representation with semi-bars of different lengths.
\end{lem}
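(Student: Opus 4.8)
The plan is to induct on the number of vertices $n$, using the standard fact that a maximal planar convex geometric representation is a triangulation $T$ of the convex polygon on its vertices $v_1, \ldots, v_n$ (listed in cyclic order): every boundary edge is present, together with a maximal noncrossing family of diagonals. I would prove a slightly stronger statement to carry the induction, namely that $T$ can be realized by semi-bars of distinct lengths placed at heights $1, \ldots, n$, the bar at height $i$ representing $v_i$, in such a way that the two extreme bars, at heights $1$ and $n$, are the two longest. This extra condition is exactly what realizes the ``closing'' boundary edge $v_1 v_n$: with $k = 0$, two bars at heights $i < j$ are visible precisely when every bar strictly between them is shorter than both, so making $v_1$ and $v_n$ the two longest forces the edge $v_1 v_n$, while the path $v_1 v_2 \cdots v_n$ is automatic since consecutive bars always see each other.

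For the inductive step I would delete an ear. The key preliminary observation is that for $n \ge 4$ the triangulation $T$ has an ear $v_i$ with $1 < i < n$: a triangulation always has at least two ears, and $v_1$ and $v_n$ cannot both be ears, since their ear-triangles $\{v_n, v_1, v_2\}$ and $\{v_{n-1}, v_n, v_1\}$ would both contain the boundary edge $v_1 v_n$, which lies in a unique triangle unless $n = 3$. Deleting such an interior ear $v_i$ yields a triangulation $T'$ on $v_1, \ldots, v_{i-1}, v_{i+1}, \ldots, v_n$ that still has $v_1 v_n$ as a boundary edge, so the inductive hypothesis provides a distinct-length representation of $T'$ with $v_1$ and $v_n$ longest. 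I then reinsert $v_i$ as a bar at a new height strictly between those of $v_{i-1}$ and $v_{i+1}$ (which are consecutive in $T'$), assigning it a length smaller than every other bar.

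It then remains to check that this insertion produces exactly $T$. Because the new bar is the globally shortest, it never blocks any sightline, so all edges of $T'$ survive; in particular the diagonal $v_{i-1} v_{i+1}$ closing the ear remains an edge, as required. The new bar is visible to its two immediate height-neighbors $v_{i-1}$ and $v_{i+1}$, since consecutive bars always see each other, and, being shortest, it sees nothing else, so its neighborhood is exactly $\{v_{i-1}, v_{i+1}\}$, matching the ear in $T$. Finally $v_1$ and $v_n$ remain the two longest bars, since the inserted bar is interior and short, so the strengthened hypothesis is preserved. The base cases $n = 2$ (a single edge) and $n = 3$ (the triangle, realized by making the middle bar shortest) are immediate.

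I expect the main obstacle to be the bookkeeping around the strengthened hypothesis: one must verify both that deletion leaves a triangulation still satisfying the ``two longest at the extremes'' condition and that reinserting the shortest possible bar neither destroys an existing visibility nor creates a spurious one beyond the two intended edges. The cleanest way to discharge this is the global-minimum length choice, which makes the new bar invisible to all but its immediate neighbors and harmless to every other pair, while the interior-ear existence argument is what guarantees the extreme bars $v_1, v_n$ are never disturbed.
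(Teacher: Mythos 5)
Your proof is correct and takes essentially the same approach as the paper: both arguments peel ears of the convex-polygon triangulation while avoiding the two distinguished vertices $v_1, v_n$, assign lengths shortest-first so that $v_1$ and $v_n$ end up the two longest, and use the fact that a globally shortest bar sees exactly its two height-neighbors and blocks nothing. The only packaging difference is that the paper phrases the peeling as a forward iteration on a cylindrical representation (with the two longest bars cyclically adjacent, then cut open and flattened), whereas you run the induction directly in the plane with the ``extremes are longest'' condition as a strengthened hypothesis; your explicit argument that $v_1$ and $v_n$ cannot both be ears also fills in a step the paper leaves terse.
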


\begin{proof}
A cylindrical semi-bar visibility representation with the two longest semi-bars adjacent to each other can be changed into a semi-bar visibility representation by cutting the cylinder between the two longest semi-bars parallel to the cylinder's axis of symmetry and then flattening the representation. We show every graph with a maximal planar convex geometric representation has a cylindrical semi-bar visibility representation with the two longest semi-bars adjacent to each other.

Let $G$ be a graph with $n$ vertices having a maximal planar convex geometric representation $C_{G}$. Pick any vertex in $C_{G}$ to be $v_{1}$ and name the vertices $v_{1}, v_{2}, \ldots, v_{n}$ in cyclic order around $C_{G}$. We inductively construct a cylindrical semi-bar visibility representation $B_{G}$ from $C_{G}$ with $n$ semi-bars $b_{1}, b_{2}, \ldots, b_{n}$ listed in cyclic order around the cylinder. For each $i$ and $j$ there will be a sightline between $b_{i}$ and $b_{j}$ in $B_{G}$ if and only if there is a segment between $v_{i}$ and $v_{j}$ in $C_{G}$. Furthermore, no two semi-bars will have the same length and each length will be one of the integers $1, 2, \ldots, n$. 

The semi-bar $b_{1}$ is assigned the length $n$ and the semi-bar $b_{n}$ is assigned $n-1$. The remaining semi-bars will be assigned lengths $1, 2, \ldots, n-2$ from shortest to longest. We may assume that $n > 2$ or else the proof is already finished. Let $C_{0} = C_{G}$. As $C_{0}$ is a maximal planar convex geometric representation, then every interior face in $C_{0}$ is a triangle. If $C_{0}$ has at least four vertices, then $C_{0}$ has at least two triangular faces, so there is a vertex in $C_{0}$ besides $v_{1}$ or $v_{n}$ with only two neighbors in $C_{0}$. If $C_{0}$ has three vertices, then every vertex in $C_{0}$ has exactly two neighbors in $C_{0}$. 

Let $v_{m_{0}}$ be any vertex in $C_{0}$ besides $v_{1}$ or $v_{n}$ with only two neighbors in $C_{0}$. The semi-bar $b_{m_{0}}$ is assigned the length $1$. Then $b_{m_{0}}$ will see the semi-bars corresponding to the neighbors of $v_{m_{0}}$ in $C_{0}$ no matter which lengths greater than $1$ are assigned to those semi-bars. Let $C_{1}$ be obtained by deleting $v_{m_{0}}$ and any edges including $v_{m_{0}}$ from $C_{0}$. Then $C_{1}$ is a maximal planar convex geometric representation.

Continuing by induction after $i$ iterations, suppose that $C_{i}$ is a maximal planar convex geometric representation. Then every interior face in $C_{i}$ is a triangle. At least one of the triangles has a vertex besides $v_{1}$ or $v_{n}$ with only two neighbors in $C_{i}$, so let $v_{m_{i}}$ be such a vertex. The semi-bar $b_{m_{i}}$ is assigned the length $i+1$. Then $b_{m_{i}}$ will see the semi-bars corresponding to the neighbors of $v_{m_{i}}$ in $C_{i}$ no matter which lengths greater than $i+1$ are assigned to those semi-bars. 

Let $C_{i+1}$ be obtained by deleting $v_{m_{i}}$ and any edges including $v_{m_{i}}$ from $C_{i}$. Then $C_{i+1}$ is a maximal planar convex geometric representation. Thus after $n-2$ iterations we obtain a cylindrical semi-bar visibility representation of $G$ with the two longest semi-bars next to each other. Figure~\ref{sbar} shows a maximal planar convex geometric graph with $8$ vertices and a corresponding semi-bar visibility representation in which the top and bottom semi-bars have lengths $8$ and $7$. The lengths of the other semi-bars are assigned according to the description in this proof. 
\end{proof}

\begin{cor}
For all graphs $G$, $G$ has a semi-bar visibility representation with semi-bars of different lengths and an edge between the topmost and bottommost semi-bars if and only if $G$ has a maximal planar convex geometric representation.
\end{cor}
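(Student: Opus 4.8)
The plan is to treat the biconditional one direction at a time, assembling the result from the three ingredients already proved in this section and the last: Lemma~\ref{mptosb}, the curling statement specialized to $k=0$, and the maximality theorem specialized to $k=0$.

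For the implication from a maximal planar convex geometric representation to the desired semi-bar representation, I would apply Lemma~\ref{mptosb} and inspect its output. The construction there assigns length $n$ to $b_{1}$ and length $n-1$ to $b_{n}$, making the two longest semi-bars cyclically adjacent; cutting the cylinder between them and flattening places them at the top and bottom of a semi-bar visibility representation with all lengths distinct. It then remains to check that these two extreme semi-bars see each other, which I would do by placing a vertical sightline at a horizontal coordinate larger than every other semi-bar length but at most the smaller of the two extreme lengths: such a sightline meets precisely the topmost and bottommost semi-bars, yielding the required edge between them.

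For the converse, suppose $G$ has a semi-bar visibility representation $R$ with distinct lengths and an edge between its topmost and bottommost semi-bars. The first step is the observation that this edge forces the two extreme semi-bars to be the two longest in $R$: the realizing sightline is a vertical segment meeting both extremes, hence meeting every semi-bar whose length is at least the sightline's horizontal coordinate, and since $0$-visibility forbids intermediate semi-bars, every other semi-bar is strictly shorter than both extremes. With this in hand, I would curl $R$ into a cylindrical semi-bar visibility representation $R'$; the $k=0$ curling statement then applies, since the top and bottom semi-bars of $R$ are the two longest, so the graphs of $R$ and $R'$ coincide and $R'$ is a cylindrical semi-bar $0$-visibility representation of $G$ with semi-bars of different lengths. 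Applying the maximality theorem at $k=0$ finally produces a maximal planar convex geometric representation of $G$.

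I expect no serious obstacle, as every component is already available; the whole content is the chaining of the three cited results at $k=0$. The only point demanding care is the pair of elementary visibility computations that identify ``there is an edge between the extreme semi-bars'' with ``the extreme semi-bars are the two longest,'' together with the verification that the construction in Lemma~\ref{mptosb} really does leave the two longest semi-bars adjacent so that flattening sends them to the top and bottom.
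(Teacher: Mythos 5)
Your proposal is correct and matches the intended derivation: the paper states this corollary without proof, and the argument it implicitly relies on is exactly your chaining of Lemma~\ref{mptosb} (whose construction makes the two longest semi-bars cyclically adjacent, hence top and bottom after cutting), the $k=0$ case of the curling lemma, and the $k=0$ case of the maximality theorem. The two elementary visibility checks you single out --- that the two extreme semi-bars of the constructed representation see each other, and that conversely an edge between the extremes forces them to be the two longest --- are precisely the steps the paper leaves implicit, and you handle both correctly.
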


Lemma~\ref{mptosb} implies every planar convex geometric graph is a subgraph of a cylindrical semi-bar visibility graph. For $k \geq 1$ we exhibit $(k+2)$-quasiplanar convex geometric graphs which are not subgraphs of any cylindrical semi-bar $k$-visibility graph. 

\begin{lem}
For all $k \geq 1$ there exist $(k+2)$-quasiplanar convex geometric graphs which are not subgraphs of any cylindrical semi-bar $k$-visibility graph.
\end{lem}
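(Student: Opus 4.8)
The plan is to argue through degeneracy. The excerpt already records that every cylindrical semi-bar $k$-visibility graph is $(2k+2)$-degenerate, and $l$-degeneracy is inherited by subgraphs: if $G$ is a subgraph of $H$, then every subgraph of $G$ is a subgraph of $H$, and the defining property of $l$-degeneracy quantifies over all subgraphs. Consequently every subgraph of a cylindrical semi-bar $k$-visibility graph is again $(2k+2)$-degenerate. So it suffices to produce, for each $k \ge 1$, one $(k+2)$-quasiplanar convex geometric graph $G$ that fails to be $(2k+2)$-degenerate; equivalently, since the degeneracy of a graph equals the maximum over its subgraphs of the minimum degree, one $(k+2)$-quasiplanar convex geometric graph containing a subgraph of minimum degree at least $2k+3$. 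This also explains the restriction $k \ge 1$: for $k=0$ the planar convex geometric graphs are exactly the triangulations of convex polygons, which are always $2$-degenerate, so no such example can exist.

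To build $G$ I would fix $n$ points in convex position with $n$ large and take $G$ to be the union of $k+1$ triangulations $T_1, \dots, T_{k+1}$ of the convex $n$-gon. The $(k+2)$-quasiplanarity is then immediate and essentially free: color each edge of $G$ by the index of one triangulation that contains it. Each color class lies inside a single $T_i$, and the edges of a polygon triangulation are pairwise noncrossing, so no two edges of the same color cross. Any family of pairwise crossing segments therefore uses pairwise distinct colors and has at most $k+1$ members, so $G$ has no $k+2$ pairwise crossing edges, for every choice of the $T_i$.

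The real work is to choose the $T_i$ so that $G$ has minimum degree at least $2k+3$, and I expect this to be the main obstacle. Every vertex automatically has its two polygon-boundary neighbors, which all the $T_i$ share, so I need each vertex to be an endpoint of at least $2k+1$ further distinct diagonals taken over the $k+1$ triangulations. A triangulation has only $n-3$ diagonals, so the average number of diagonal-incidences per vertex, even when the $k+1$ diagonal sets are made pairwise disjoint, tends only to $2(k+1)=2k+2$, barely above the required $2k+1$. The margin is thin, so the triangulations must be chosen to spread their diagonals and their low-degree ears evenly around the polygon---for instance as rotations of a single balanced triangulation by a large step, so that the diagonal sets are pairwise disjoint and no vertex is an ear in more than one layer. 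Guaranteeing that every vertex, and not merely the average vertex, attains $2k+1$ distinct diagonals is exactly the point at which the degeneracy bound $2k+2$ has to be beaten by one simultaneously at all vertices, and verifying this balancing for an explicit choice of $n$ and of the $T_i$ is the crux of the argument.
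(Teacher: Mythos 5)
Your reduction to degeneracy is sound and matches the paper's strategy: cylindrical semi-bar $k$-visibility graphs are $(2k+2)$-degenerate, degeneracy passes to subgraphs, so it suffices to exhibit a $(k+2)$-quasiplanar convex geometric graph of minimum degree at least $2k+3$. Your coloring argument for quasiplanarity (any pairwise crossing family meets each non-crossing triangulation at most once) is also correct and clean. The problem is that the proof stops exactly where the content should be: you never produce the $k+1$ triangulations, and you say yourself that ``verifying this balancing for an explicit choice of $n$ and of the $T_i$ is the crux of the argument.'' That crux is genuinely hard for your construction, not a routine check. Every triangulation of a convex polygon has at least two ears, i.e.\ vertices incident to no diagonal, and such a vertex must collect all $2k+1$ of its required diagonal incidences from the remaining $k$ triangulations; meanwhile the counting is razor-thin (the total diagonal-degree available is $2(k+1)(n-3)$ against a demand of $(2k+1)n$, leaving slack $n-6(k+1)$ spread over all $n$ vertices, and only when the diagonal sets are pairwise disjoint). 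Your proposed remedy --- rotations of a single balanced triangulation --- fails concretely: rotating a zigzag triangulation of a $12$-gon by any shift leaves vertices of union degree $4 < 5$ when $k=1$, and rotating the ``clip every other vertex'' triangulation fails at the ears of the inner polygon. A correct completion for $k=1$ does exist (take a Hamiltonian cycle of the icosahedron as the polygon boundary; the edges inside and outside the cycle form two disjoint triangulations of a $12$-gon and every vertex has degree exactly $5$), but nothing like this is supplied for general $k$, where one would need $k+1$ triangulations of a $6(k+1)$-gon forming a $(2k+3)$-regular union --- an existence statement that itself requires proof.

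The paper sidesteps this balancing problem entirely, and it is worth seeing how. It starts from only a perfect matching $D$ of $4(k+1)$ ``long'' chords arranged in two nested bundles of $k+1$ (so $D$ alone has no three pairwise crossing edges), and then takes \emph{any} maximal $(k+2)$-quasiplanar convex geometric representation $D'$ containing $D$. Maximality does the degree work for free: as noted in the paper's definitions, a maximal representation must contain every $j$-pair with $j \le k$, because a chord with at most $k$ points on one side can be crossed by at most $k$ pairwise crossing edges and hence can always be added. Thus every vertex of $D'$ has $2(k+1)$ short edges plus its one matching edge, giving minimum degree $2k+3$ with no case analysis and no delicate construction. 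If you want to salvage your write-up with minimal change, replace the triangulation construction by this maximal-completion argument; if you want to keep the union-of-triangulations route, you must actually construct the triangulations, and for general $k$ that is an open piece of your proof.
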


\begin{proof}
Fix $k \geq 1$. Let $D$ be a drawing with $4(k+1)$ vertices $v_{1}, v_{2}, \ldots, v_{4(k+1)}$ in convex position listed in cyclic order with straight edges $\left\{v_{i}, v_{3(k+1)+1-i} \right\}$ and $\left\{v_{k+1+i}, v_{4(k+1)+1-i} \right\}$ for each $1 \leq i \leq k+1$. Let $D'$ be any maximal $(k+2)$-quasiplanar convex geometric representation with $4(k+1)$ vertices which contains $D$. For any vertices $u$ and $v$ for which $\left\{u,v\right\}$ is a $j$-pair in $D'$ for some $j \leq k$, there is an edge between $u$ and $v$ in $D'$ since $D'$ is maximal. Then each vertex in $D'$ has $2(k+1)$ edges which are $j$-pairs for some $j \leq k$ and another edge from $D$ which is a $j$-pair for some $j > k$. Thus every vertex in $D'$ has degree at least $2(k+1)+1$, but cylindrical semi-bar $k$-visibility graphs are $(2k+2)$-degenerate. So the graph represented by $D'$ is not a subgraph of any cylindrical semi-bar $k$-visibility graph. See Figure~\ref{keq1} for an example in the case $k = 1$.
\end{proof}

\begin{figure}[t]
\begin{center}
\includegraphics[scale=0.3]{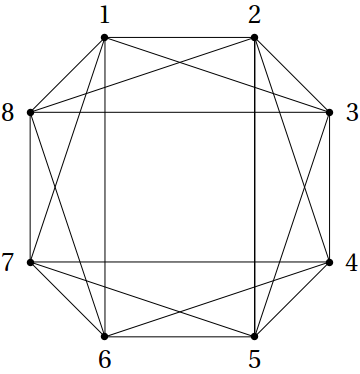}
\setlength{\abovecaptionskip}{0pt}
\caption{A $3$-quasiplanar convex geometric graph which is not $4$-degenerate.}
\label{keq1}
\end{center}
\end{figure}

If a $(2k+2)$-degenerate graph $G$ has a maximal $(k+2)$-quasiplanar convex geometric representation, then we can show that $G$ is a cylindrical semi-bar $k$-visibility graph using a proof like the one for Lemma~\ref{mptosb}.

\begin{thm}\label{final}
Every $(2k+2)$-degenerate graph with a maximal $(k+2)$-quasiplanar convex geometric representation has a cylindrical semi-bar $k$-visibility representation with semi-bars of different lengths.
\end{thm}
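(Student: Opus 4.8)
The plan is to imitate the inductive construction of Lemma~\ref{mptosb}, with ``minimum-degree vertex'' playing the role of ``degree-two ear vertex'' and ``maximal $(k+2)$-quasiplanar convex geometric representation'' playing the role of ``triangulation.'' Let $G$ be a $(2k+2)$-degenerate graph on $n$ vertices with a maximal $(k+2)$-quasiplanar convex geometric representation $C_G$, and name the vertices $v_1,\ldots,v_n$ in cyclic order. I would induct on $n$. When $n\le 2k+2$ every pair of vertices is a $j$-pair for some $j\le k$, since the shorter of the two arcs between them carries at most $(n-2)/2\le k$ points; hence $C_G$ forces all $\binom{n}{2}$ edges and $G=K_n$. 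Any $n$ distinct-length semi-bars on the cylinder realize $K_n$, because for any two of them the shorter arc contains at most $k$ of the remaining bars, so every pair is visible. This is the base case.

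For $n\ge 2k+3$ I first locate the vertex that will receive the shortest semi-bar. Because $C_G$ is maximal, every $j$-pair with $j\le k$ is an edge, so each vertex is joined to its $k+1$ nearest cyclic neighbors on each side and therefore has degree at least $2k+2$. Since $G$ is $(2k+2)$-degenerate, some vertex $v$ has degree at most $2k+2$, hence exactly $2k+2$, and its neighbors are \emph{precisely} its $2k+2$ nearest cyclic neighbors (the forced $j$-pair edges leave no room for any others). I assign $v$ length $1$. In the cylindrical picture the shortest semi-bar sees exactly the $k+1$ bars on each side of it: to a bar at cyclic distance $d$ the shorter arc contains $d-1$ bars, all longer than $v$, which is at most $k$ exactly when $d\le k+1$. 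Thus $v$ sees exactly its $2k+2$ neighbors, as required. Moreover the shortest bar blocks no sightline between two longer bars, since such a sightline may be drawn at a radius larger than $1$; consequently the visibility relation among the remaining bars is exactly that of the representation with $v$ deleted, so it suffices to solve the problem for $G-v$ using lengths $2,\ldots,n$ and then reinsert $v$.

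To close the induction I must check that $G-v$, drawn on the remaining $n-1$ points in their inherited cyclic order, is again a $(2k+2)$-degenerate graph with a \emph{maximal} $(k+2)$-quasiplanar convex geometric representation. Degeneracy is hereditary, so the real content, and the hard part, is maximality. The danger is a pair $(u,w)$ that is a non-edge of $G$ but, after $v$ is removed, becomes a $j$-pair with $j\le k$ and so would have to be an edge of $G-v$; I must show every such pair is in fact already an edge of $G$. These are exactly the pairs for which $v$ lies on an arc carrying precisely $k+1$ points, which forces both $u$ and $w$ to be neighbors of $v$. The plan is to argue that such a pair cannot be a non-edge: if it were, maximality of $C_G$ would supply $k+1$ pairwise crossing edges all crossing $(u,w)$, and since the short arc holds exactly $k+1$ points each must be used once, so one of these edges is incident to $v$ and runs to a long-arc neighbor of $v$. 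But a family of pairwise crossing edges that all cross a fixed chord has a rigid order-reversing structure, and $v$ reaches only its $2k+2$ nearest neighbors; I expect these two facts to be incompatible, forcing the pair to be an edge after all. This generalizes the elementary observation used implicitly in the $k=0$ case that the two neighbors of an ear are themselves joined by an edge of the triangulation.

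Once maximality of $G-v$ is established, the induction hypothesis yields a cylindrical semi-bar $k$-visibility representation of $G-v$ with distinct lengths; relabeling those lengths as $2,\ldots,n$ and inserting $v$ at its cyclic position with length $1$ produces, by the visibility analysis above, a cylindrical semi-bar $k$-visibility representation of $G$ with all lengths distinct. I anticipate that verifying the incompatibility in the third paragraph will require the most care, as it is the precise point where the degeneracy hypothesis and the maximality of the representation interact.
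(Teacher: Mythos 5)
Your construction follows the paper's proof of Theorem~\ref{final} (peel off a minimum-degree vertex, note that maximality forces its neighbors to be exactly its $2k+2$ nearest cyclic neighbors, assign it the shortest bar, recurse), and your visibility analysis for the shortest bar is correct. The gap is in your third paragraph. To apply the induction hypothesis to $G-v$ you need the inherited drawing $C_1$ to be \emph{maximal}: adding \emph{any} chord to $C_1$ must create $k+2$ pairwise crossing edges. You propose to verify only the necessary condition that every $j$-pair of $C_1$ with $j\le k$ is an edge, and you treat this as if it were maximality. It is strictly weaker. For $k=1$ and $n=8$ points $v_1,\dots,v_8$ in convex position, the drawing consisting of all chords of cyclic distance at most $2$ contains every $j$-pair with $j\le 1$, yet it is not maximal: the chord $\{v_1,v_5\}$ crosses only the drawn edges $\{v_2,v_8\}$ and $\{v_4,v_6\}$, which do not cross each other, so adding it creates no three pairwise crossing edges. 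The same example shows you also cannot rescue the induction by downgrading the hypothesis to your $j$-pair condition: that drawing is $4$-regular (hence $4$-degenerate) and satisfies the condition, but deleting $v_1$ turns $\{v_3,v_8\}$ into a $1$-pair that is not an edge, so the weaker invariant is not preserved by deletion; moreover, your own preservation argument explicitly invokes maximality of the ambient drawing, so it cannot be iterated under the weaker invariant.

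The repair is shorter than the argument you sketch, and it is what the paper does. Let $e$ be any non-edge of $C_1$. By maximality of $C_0$, adding $e$ to $C_0$ creates a set $S$ of $k+2$ pairwise crossing edges. No member $f$ of $S$ can be incident to $v$: since every neighbor of $v$ is at cyclic distance at most $k+1$, such an $f$ has at most $k$ vertices strictly on one side, while the other $k+1$ members of $S$ are pairwise vertex-disjoint edges each crossing $f$, which would require $k+1$ distinct vertices on that side. Hence $S$ lies entirely in $C_1+e$, so $e$ cannot be added to $C_1$ either, and $C_1$ is maximal. This one argument handles every non-edge of $C_1$ at once; your order-reversing analysis of crossing families through a chord whose short side has exactly $k+1$ points is correct as far as it goes (the contradiction you anticipate does materialize), but it addresses only the near pairs and thus, as the sole maximality check, leaves the inductive step---and hence the theorem---unproved. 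Once maximality of $C_1$ is established as above, your near-pair claim follows for free.
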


\begin{proof}
Let $G$ be a $(2k+2)$-degenerate graph with $n$ vertices having a maximal $(k+2)$-quasiplanar convex geometric representation $C_{G}$. Pick any vertex in $C_{G}$ to be $v_{1}$ and name the vertices $v_{1}, v_{2}, \ldots, v_{n}$ in cyclic order around $C_{G}$. We construct a cylindrical semi-bar $k$-visibility representation $B_{G}$ from $C_{G}$ with $n$ semi-bars $b_{1}, b_{2}, \ldots, b_{n}$ listed in cyclic order around the cylinder. For each $i$ and $j$ there will be a sightline between $b_{i}$ and $b_{j}$ in $B_{G}$ if and only if there is a segment between $v_{i}$ and $v_{j}$ in $C_{G}$. 

Furthermore, no two semi-bars will have the same length and each length will be one of the integers $1, 2, \ldots, n$. The semi-bars will be assigned lengths from shortest to longest. Let $C_{0} = C_{G}$. Each $C_{i}$ will be a maximal $(k+2)$-quasiplanar convex geometric representation.

Since $G$ is a $(2k+2)$-degenerate graph, then $C_{0}$ has a vertex with at most $2k+2$ neighbors in $C_{0}$, so let $v_{m_{0}}$ be such a vertex. Since $C_{0}$ is a maximal $(k+2)$-quasiplanar convex geometric representation, then the neighbors of $v_{m_{0}}$ in $C_{0}$ are the vertices $v$ for which $\left\{v, v_{m_{0}} \right\}$ is a $j$-pair in $C_{0}$ for each $j \leq k$. The semi-bar $b_{m_{0}}$ is assigned the length $1$. Then $b_{m_{0}}$ will see the semi-bars corresponding to the neighbors of $v_{m_{0}}$ in $C_{0}$ no matter which lengths greater than $1$ are assigned to those semi-bars. 

Let $C_{1}$ be obtained by deleting $v_{m_{0}}$ and any edges including $v_{m_{0}}$ from $C_{0}$. If we suppose for contradiction that $C_{1}$ is not a maximal $(k+2)$-quasiplanar convex geometric representation, then there is an edge which can be added to $C_{1}$ so that the resulting representation is still a $(k+2)$-quasiplanar convex geometric representation. However, this same edge can be added to $C_{0}$ so that the resulting representation is still a $(k+2)$-quasiplanar convex geometric representation since no edge containing $v_{m_{0}}$ can cross more than $k$ other edges in $C_{0}$. This contradicts the hypothesis that $C_{0}$ is a maximal $(k+2)$-quasiplanar convex geometric representation. Thus $C_{1}$ is a maximal $(k+2)$-quasiplanar convex geometric representation.

Continuing by induction after $i$ iterations suppose that $C_{i}$ is a maximal $(k+2)$-quasiplanar convex geometric representation of a $(2k+2)$-degenerate graph. Then $C_{i}$ has a vertex with at most $2k+2$ neighbors in $C_{i}$, so let $v_{m_{i}}$ be such a vertex. Since $C_{i}$ is a maximal $(k+2)$-quasiplanar convex geometric representation, then the neighbors of $v_{m_{i}}$ in $C_{i}$ are the vertices $v$ for which $\left\{v, v_{m_{i}} \right\}$ is a $j$-pair in $C_{i}$ for each $j \leq k$. The semi-bar $b_{m_{i}}$ is assigned the length $i+1$. Then $b_{m_{i}}$ will see the semi-bars corresponding to the neighbors of $v_{m_{i}}$ in $C_{i}$ no matter which lengths greater than $i+1$ are assigned to those semi-bars. 

Let $C_{i+1}$ be obtained by deleting $v_{m_{i}}$ and any edges including $v_{m_{i}}$ from $C_{i}$. Then $C_{i+1}$ is a maximal $(k+2)$-quasiplanar convex geometric representation. Thus after $n$ iterations we obtain a cylindrical semi-bar $k$-visibility representation of $G$.
\end{proof}

\begin{cor}
For all graphs $G$, $G$ has a cylindrical semi-bar $k$-visibility representation with semi-bars of different lengths if and only if $G$ is $(2k+2)$-degenerate and has a maximal $(k+2)$-quasiplanar convex geometric representation.
\end{cor}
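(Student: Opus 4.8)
The plan is to observe that this corollary is the conjunction of results already proved, one implication supplied by each, and then to carefully check that the ``semi-bars of different lengths'' qualifier lines up across the cited statements. I would prove the two implications separately and keep the exposition short, since no new construction is needed.

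For the forward direction, suppose $G$ has a cylindrical semi-bar $k$-visibility representation with semi-bars of different lengths. The ``maximal $(k+2)$-quasiplanar convex geometric representation'' half of the conclusion is exactly the content of the earlier theorem asserting that every cylindrical semi-bar $k$-visibility graph with a representation having semi-bars of different lengths has a maximal $(k+2)$-quasiplanar convex geometric representation, so I would simply invoke it. For the degeneracy half, I would recall the observation from the Definitions section that in any subset of semi-bars the shortest one sees at most $2k+2$ others. The one point I would spell out carefully is that this must be promoted to a statement about \emph{all} subgraphs, not merely induced ones: given any subgraph $H$ on a vertex set $S$, the semi-bar of minimal length among those corresponding to $S$ has degree at most $2k+2$ in the induced subgraph on $S$, hence degree at most $2k+2$ in $H$ as well. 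Therefore every subgraph of $G$ has a vertex of degree at most $2k+2$, so $G$ is $(2k+2)$-degenerate.

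For the backward direction, suppose $G$ is $(2k+2)$-degenerate and has a maximal $(k+2)$-quasiplanar convex geometric representation. This is precisely the hypothesis of Theorem~\ref{final}, whose conclusion produces a cylindrical semi-bar $k$-visibility representation of $G$ with semi-bars of different lengths. So this direction follows immediately from that theorem with no further work.

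The main obstacle, such as it is, lies not in any individual step but in confirming the alignment of hypotheses. I would double-check that the degeneracy argument does not covertly rely on the lengths being distinct (it does not, as the shortest-semi-bar bound holds for any semi-bars), and that the maximality theorem's requirement of distinct lengths matches exactly the hypothesis we are given in the forward direction. Once these two alignments are verified, the corollary is nothing more than the combination of the cited theorem and Theorem~\ref{final}.
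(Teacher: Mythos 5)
Your proposal is correct and follows exactly the route the paper intends for this corollary: the backward direction is Theorem~\ref{final} verbatim, and the forward direction combines the earlier theorem (that a cylindrical semi-bar $k$-visibility graph with semi-bars of different lengths has a maximal $(k+2)$-quasiplanar convex geometric representation) with the degeneracy observation from the Definitions section. Your extra care about promoting the shortest-semi-bar bound from induced subgraphs to arbitrary subgraphs is a sound, if minor, tightening of an argument the paper leaves implicit.
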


\begin{figure}[t]
\begin{center}
\includegraphics[scale=0.3]{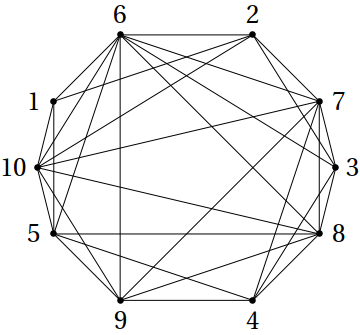}
\setlength{\abovecaptionskip}{0pt}
\caption{A maximal $3$-quasiplanar convex geometric representation corresponding to a cylindrical semi-bar $1$-visibility representation with semi-bars of lengths $1, 6, 2, 7, 3, 8, 4, 9, 5, 10$ in cyclic order.}
\label{m3q}
\end{center}
\end{figure}

\begin{figure}[t]
\centering
\mbox{\subfigure{\includegraphics[scale = 0.25]{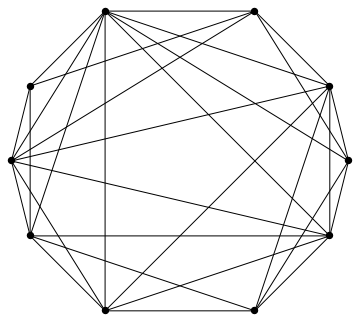}}\quad
\subfigure{\includegraphics[scale = 0.25]{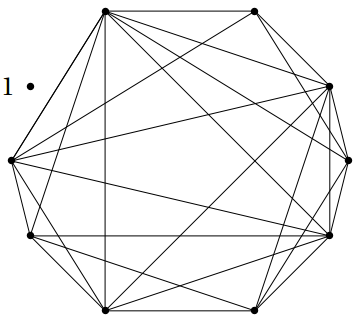} }
\subfigure{\includegraphics[scale = 0.25]{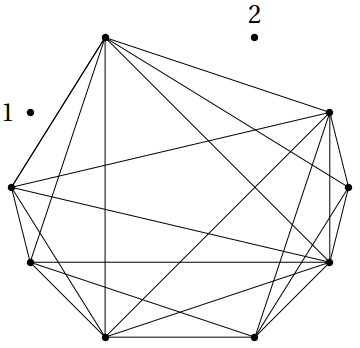} }}
\mbox{\subfigure{\includegraphics[scale = 0.25]{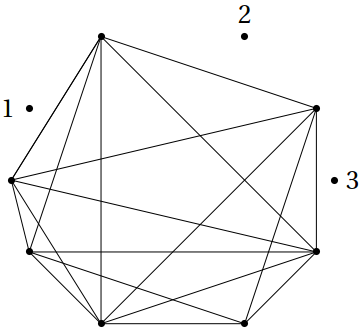} }
\subfigure{\includegraphics[scale = 0.25]{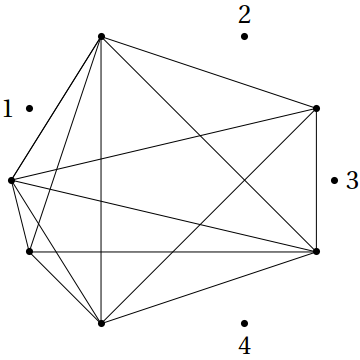} }
\subfigure{\includegraphics[scale = 0.25]{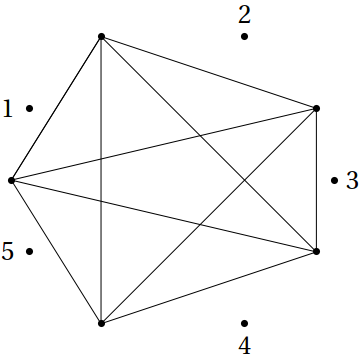} }}
\caption{A maximal $3$-quasiplanar convex geometric representation which cannot be transformed, without changing cyclic positions of vertices, into a cylindrical semi-bar $1$-visibility representation having semi-bars of different lengths with a pair of adjacent semi-bars among the $5$ longest semi-bars.} \label{counter}
\end{figure}

Lemma~\ref{mptosb} showed that any maximal planar convex geometric representation can be transformed without changing cyclic positions of vertices into a cylindrical semi-bar visibility representation having semi-bars of different lengths with the two longest semi-bars adjacent. However for $k \geq 1$ there exist maximal $(k+2)$-quasiplanar convex geometric representations which cannot be transformed, without changing cyclic positions of vertices, into cylindrical semi-bar $k$-visibility representations having semi-bars of different lengths with a pair of adjacent semi-bars among the $2k+3$ longest semi-bars.

For each $k \geq 1$ consider the $(k+2)$-quasiplanar representation $C$ obtained by applying Theorem~\ref{sbtokc} to the cylindrical semi-bar $k$-visibility representation $R$ having $(2k+3)(k+1)$ semi-bars with lengths $1$, $2k+4$, $\ldots$, $1+k(2k+3)$, $2$, $2k+5$, $\ldots$, $2+k(2k+3)$, $3$, $2k+6$, $\ldots$, $3+k(2k+3)$, $\ldots$, $2k+3$, $4k+6$, $\ldots$, $(2k+3)(k+1)$ in cyclic order. Let $B$ be a cylindrical semi-bar $k$-visibility representation with semi-bars of lengths $1, 2, \ldots, (2k+3)(k+1)$ constructed from $C$ using Theorem~\ref{final}. Figure~\ref{m3q} shows the representation $C$ when $k = 1$. 

The only vertex of degree $2k+2$ in $C$ is the vertex $x$ corresponding to the semi-bar of length $1$ in $R$, so $x$ has length $1$ in $B$. When $x$ is removed from $C$, the only vertex of degree $2k+2$ in the resulting representation is the vertex $y$ corresponding to the semi-bar of length $2$ in $R$, so $y$ has length $2$ in $B$. Similarly the vertices in $C$ corresponding to semi-bars of lengths $3$, $4$, $\ldots$, $(2k+3)k$ in $R$ have lengths $3$, $4$, $\ldots$, $(2k+3)k$ respectively in $B$. Then no pair of semi-bars among the $2k+3$ longest semi-bars can be adjacent in $B$, unless the cyclic positions of vertices in $C$ are changed in $B$. Figure~\ref{counter} illustrates the process of assigning lengths to $B$ when $k = 1$. 

\section{Acknowledgments}
Jesse Geneson was supported by an NSF graduate research fellowship. He thanks Katherine Bian for helpful comments on this paper.

\end{document}